\DeclareMathAlphabet{\mathpzc}{OT1}{pzc}{m}{it}
\newtheorem{theorem}{Theorem}[section]
\newtheorem{lemma}[theorem]{Lemma}
\newcommand{\abs}[1]{\lvert#1\rvert}
\begin{document}
\title{A note on Flavell's theorem associated with Frobenius groups}
\author{Liguo He\textsuperscript{1} \thanks{E-mail address:  helg-lxy@sut.edu.cn(L.He), \,  Zhugang@habu.edu.cn (G.Zhu)},
Gang Zhu\textsuperscript{2}\\{\footnotesize 1. Dept. of Math., Shenyang University of Technology, Shenyang, 110870,  PR China}\\{\footnotesize 2. College of Teacher Education, Harbin University, Harbin, 150086,  PR China}}
\date{}
\maketitle

\noindent \textbf{Abstract.} {\footnotesize{Let $G$ be a Frobenius group with the Frobenius kernel $K$. Suppose that $G$ contains a nontrival subgroup $D \subseteq K$ such that the normalizer $N_G(D) \not\subseteq K$. When $D$ is no $2$-group, Flavell proved, without using character theory, that $K$ is a subgroup of $G$. Based on this result, we further prove that $K$ is a subgroup when $D$ is a $2$-group. }}

\noindent \textbf{2000 MSC:} {\footnotesize 20Dxx}

\noindent \textbf{Keywords:} {\footnotesize   character,  Frobenius group, finite group }

\section{Introduction}

Let $G$ be a finite group. If $G$ acts transitively on the finite set $\Omega = \{\alpha_1, \alpha_2, \cdots, \alpha_n\},$ $n > 1$, the fixed-point subgroup $C_G(\alpha_i)$ for each $\alpha_i$ is nontrivial and any intersection $C_G(\alpha_i) \cap C_G(\alpha_j) = 1$ for the different $i,j$, then $G$ is called a Frobenius group. Set $K = (G - \cup_{i = 1}^n C_G(\alpha_i)) \cup \{1\}$, then Frobenius group $G$ obviously has the \textit{Frobenius decomposition} $G = K \cup C_G(\alpha_1)\cup C_G(\alpha_2)\cup \cdots \cup C_G(\alpha_n)$. Usually $K$ is called the Frobenius kernel and $C_G(\alpha_i)$ a Frobenius complement. The $K$ is obviously a normal subset of $G$.  Equivalently, if $G$ contains a subgroup $H$ such that the normalizer $N_G(H) = H$ and any intersection $H^x \cap H^y = 1$ for the distinct conjugates $H^x$ and $H^y$ of $H$ in $G$ (with $x, y \in G$), then $G$ is said to be a Frobenius group. The subgroup $H$ is called a Frobenius complement.

Applying the character theory of finite groups,  Frobenius proved  that $K$ is a normal subgroup of $G$ and $G = K C_G(\alpha_i)$ with $K \cap C_G(\alpha_i) = 1$ (for example, see \cite{Fxx} or \cite[Satz V.7.6]{Hu}), which is referred to as Frobenius' theorem. As the action of $G$ on $\Omega$ is transitive, it is clear that all $C_G(\alpha_i)$ are conjugate each other by elements of $G$.

Two famous applications of the character theory of finite groups are Burnside's $p^aq^b$-theorem and the above Frobenius' theorem, which are proved at the beginning of the last century. It was then a challenge to find a proof without characters for the two theorems. The character-free proof for Burnside's theorem was reached by Bender, Goldschmidt and Mathsuyama at the early 1970's (see \cite{Benx, Golx, Matx}). For Frobenius' theorem,  the partial results were attained by Corr\'adi, Hov\'ath and Flavell in \cite{C40,F11}.

Flavell in \cite{F11} proved the following result.

\textsc{Theorem A.} \textit{Let $G$ be a Frobenius group with kernel $K$, and suppose that $G$ contains a non-$2$-subgroup $1 \subsetneq D \subseteq K$ such that $N_G(D) \not\subseteq K$. Then $K$ is a subgroup.}

Using this theorem and its proof together with Frobenius' criterion for $p$-nilpotence (7.2.4, p170, \cite{Kurz}), we prove the following result.

\textsc{Theorem B.} \textit{ Let $G$ be a Frobenius group with kernel $K$, and suppose that $G$ contains a $2$-subgroup $1 \subsetneq D \subseteq K$ such that $N_G(D) \not\subseteq K$. Then $K$ is a subgroup.}

Combining the two theorems above, the next corollary is immediate.

\textsc{Corollary C.} \textit{ Let $G$ be a Frobenius group with kernel $K$, and suppose that $G$ contains a subgroup $1 \subsetneq D \subseteq K$ such that $N_G(D) \not\subseteq K$. Then $K$ is a subgroup.}

We always use $\pi$ to denote a set of prime numbers and $\pi'$ the complement for $\pi$ in the full set of prime numbers. For a finite group $G$, write $\pi(G)$ for the set of prime divisors of the order $\abs{G}$ of $G$.

Unless otherwise stated, the notation and terminology is standard, as presented in the general finite group theory textbooks.

\section{Preliminaries}

Some useful facts are collected as follows.

\begin{lemma}\label{fa1}
Let $G$ be a Frobenius group with the Frobenius kernel $K$ and a Frobenius complement $H$, then the following statements are true.
\begin{enumerate}
\item $H$ is a Hall $\pi$-subgroup of $G$ ($\mathpzc{E}_{\pi}$-property).
\item $\abs{G} = \abs{K}\abs{H}$.
\item If $1 \neq x \in K$, then $C_G(x) \subseteq K$.
\end{enumerate}
\end{lemma}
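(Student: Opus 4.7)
My plan is to prove the three parts in the order (2), (3), (1), relying only on the defining action of $G$ on $\Omega$ and elementary orbit counting, so no appeal to Frobenius' theorem is needed. For (2), I will apply orbit--stabilizer to the transitive action of $G$ on $\Omega$: since $H=C_G(\alpha_1)$ is the stabilizer of $\alpha_1$, this gives $|G|=n|H|$. Counting $|K|$ directly from its defining formula, the union $\bigcup_i(C_G(\alpha_i)\setminus\{1\})$ is disjoint because $C_G(\alpha_i)\cap C_G(\alpha_j)=1$ for distinct $i,j$, hence $|K|=|G|-n(|H|-1)=n$, and therefore $|G|=|K||H|$.

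For (3), I will take $1\neq x\in K$ and $y\in C_G(x)$ with $y\neq 1$, and argue by contradiction that $y$ must lie in $K$. If not, then $y\in C_G(\alpha_i)$ for some $i$, so $y\cdot\alpha_i=\alpha_i$. The key step uses the commutation of $x$ and $y$: the point $x\cdot\alpha_i$ is also fixed by $y$, since $y(x\alpha_i)=(yx)\alpha_i=(xy)\alpha_i=x(y\alpha_i)=x\alpha_i$. Because the nontrivial $y$ can fix at most one point of $\Omega$ (distinct point stabilizers intersecting trivially), one obtains $x\cdot\alpha_i=\alpha_i$, i.e., $x\in C_G(\alpha_i)$, contradicting $x\in K\setminus\{1\}$.

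For (1), the plan is to leverage (3): since $K$ is a normal subset of $G$, conjugation by $H$ gives an action of $H$ on $K$. If $1\neq h\in H$ fixed some $1\neq x\in K$, then $h\in C_G(x)\subseteq K$, contradicting $H\cap K=\{1\}$. Hence no nonidentity element of $H$ has a nontrivial fixed point. Picking any prime $p$ dividing $|H|$ and an element $h\in H$ of order $p$, the orbits of $\langle h\rangle$ on $K\setminus\{1\}$ all have size exactly $p$, so $p\mid |K|-1$, whence $p\nmid|K|$. Combined with (2), this gives $\gcd(|H|,|K|)=1$ and $|G|=|H||K|$, so $H$ is a Hall $\pi(H)$-subgroup.

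The whole argument is routine orbit counting; the one mildly slick step, and the main obstacle one has to identify, is the commutation identity $y(x\alpha_i)=x\alpha_i$ in (3), which is the lever forcing $y\in K$ and from which the fixed-point-free action used in (1) then follows.
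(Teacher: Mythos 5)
Your proof is correct in all three parts: the orbit--stabilizer count giving $|K|=n$ and $|G|=|K||H|$, the commutation trick $y(x\alpha_i)=x\alpha_i$ forcing centralizers of nontrivial kernel elements into $K$, and the fixed-point-free conjugation action of $H$ on $K\setminus\{1\}$ yielding $p\mid |K|-1$ for every prime $p$ dividing $|H|$. The paper itself offers no argument here --- it simply cites Lemma 2.1 of Flavell --- and your self-contained, character-free counting proof is essentially the standard one found in that reference, so there is nothing to fault and no genuinely different route to compare.
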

\begin{proof}
See Lemma 2.1 of \cite{F11}, for example.
\end{proof}

\begin{lemma}\label{fa2}
Let $G$ be a Frobenius group with kernel $K$ and complement $H$, and suppose that $H$ normalizes a nontrivial subgroup contained in $K$. Then $K$ is a subgroup.
\end{lemma}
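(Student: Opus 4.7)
The plan is to analyse the auxiliary subgroup $L := DH$. Since $D \subseteq K$ and any non-identity element of $H$ lies in some complement rather than in $K$, we have $D \cap H = 1$; together with $H \leq N_G(D)$ this yields that $L$ is a subgroup with $L = D \rtimes H$ and $|L| = |D|\,|H|$. Next I aim to recognise $L$ as itself a Frobenius group with complement $H$ and kernel $D$. The key ingredient is Lemma~\ref{fa1}(3): for any $1 \neq d \in D$ we have $C_G(d) \subseteq K$, hence $C_H(d) \leq H \cap K = 1$, which shows $H$ acts fixed-point-freely on $D \setminus \{1\}$ by conjugation. The remaining Frobenius conditions on $L$, namely $N_L(H) = H$ and $H \cap H^x = 1$ for $x \in L \setminus H$, are inherited directly from $G$.

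With $L$ identified as a Frobenius subgroup whose kernel $D$ is a genuine subgroup, the task is to upgrade this local information to the global statement that $K$ itself is a subgroup of $G$. The route I would take is to suppose for contradiction that $K$ is not closed under multiplication: then there exist $k_1, k_2 \in K$ with $k_1 k_2$ lying in some complement $H^x$, and after conjugation (using that $K$ is a normal subset of $G$) one may arrange $k_1 k_2 = h \in H \setminus \{1\}$. Iteratively conjugating the relation by $h$ produces $k_1^{h^i} k_2^{h^i} = h$ for every $i \geq 0$, and couples this with the fact that $h$, viewed as an element of $N_G(D)$, permutes $D$ without fixed points on $D \setminus \{1\}$. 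The aim is to exploit this combined action on $D$ and on the pair $(k_1, k_2)$ to produce an element of $H$ lying in $K$, yielding the required contradiction.

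The main obstacle is precisely this final contradiction. Knowing that $H$ normalises $D$ gives sharp control only over $D$ itself; translating that into control over arbitrary products of elements of the set $K$ is the hard part. In a character-free setting this is typically accomplished either by a local Sylow analysis --- exploiting that $\gcd(|D|,|H|) = 1$ in a Frobenius group to produce an $H$-invariant Sylow $p$-subgroup $P \leq D$ for each prime $p \in \pi(D)$ by coprime action, and then applying Frobenius' $p$-nilpotence criterion (7.2.4 of \cite{Kurz}) --- or by an orbit count on the coset space $G/H$, where each non-identity element of $G$ fixes at most one coset under left multiplication and elements of $K$ fix none, so that $D$ acts freely on $G/H$ with orbits of size $|D|$. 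Combining this orbit structure with the conjugation action of $H$ on $D$ should constrain the putative product $k_1 k_2$ sharply enough to force it back into $K$.
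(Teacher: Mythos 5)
The paper offers no proof of this lemma at all --- it is quoted verbatim from \cite[Lemma 2.2(iv)]{F11} --- so your write-up has to stand on its own, and it does not. The only thing you actually establish is that $L = DH$ is a Frobenius group with complement $H$ and kernel $D$. That part is correct (the fixed-point-freeness of $H$ on $D \setminus \{1\}$ via $C_G(d) \subseteq K$ and $H \cap K = 1$ is exactly the right observation), but it is essentially a repackaging of the hypothesis and is not the content of the lemma. What has to be shown is that the \emph{global} kernel $K$ --- a priori only a normal subset, namely the complement of the union of the conjugates of $H$ --- is closed under multiplication. That is precisely the step you label ``the main obstacle'' and then leave open, ending with ``the aim is to exploit\dots'' and ``should constrain\dots''. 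A proof cannot end with its aim.

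Neither of the two strategies you sketch is developed into an argument, and neither obviously works as stated. Frobenius' $p$-nilpotence criterion is a statement about a \emph{group}, and $K$ is not yet known to be one, so it is unclear to what you would apply it; producing an $H$-invariant Sylow subgroup of $D$ gives no control over a product $k_1 k_2$ of kernel elements lying far from $D$. The observation that $D$ acts freely on $G/H$ is true, but you supply no mechanism converting it into the statement that $k_1 k_2$ fixes no coset. The iterated relation $k_1^{h^i} k_2^{h^i} = h$ is valid but never used. In short: the easy reduction is done, the theorem is not proved, and the missing step is the entire theorem. For a self-contained argument you should follow \cite[Lemma 2.2(iv)]{F11} (or the corresponding passage in Huppert), where the transition from the existence of an $H$-invariant subgroup $1 < D \subseteq K$ to the closure of $K$ is carried out by explicit counting of conjugates inside $G$, not by the local analysis of $DH$ alone.
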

\begin{proof}
See Lemma 2.2(iv) of \cite{F11}, for example.
\end{proof}

Lemma \ref{fa1}(1) and the next result show that Frobenius complements of a Frobenius group behave just like Sylow subgroups of a finite group.

\begin{lemma}\label{hacd}
Let $G$ be a Frobenius group, then
\begin{enumerate}
\item Every two Frobenius complements are conjugate in $G$($\mathpzc{C}_{\pi}$-property).
\item Any $\pi$-subgroup is contained in some Frobenius complement ($\mathpzc{D}_{\pi}$-property).
\end{enumerate}
\end{lemma}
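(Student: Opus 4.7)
The plan is to combine Frobenius's theorem (cited in the introduction, which gives $K \triangleleft G$) with the Schur--Zassenhaus theorem. The bridge between the two is the identification of $K$ with the set of $\pi'$-elements of $G$, where $\pi = \pi(H)$ for a fixed Frobenius complement $H$.

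To carry out this identification, one direction is immediate: any $\pi'$-element $x \neq 1$ cannot lie in any conjugate of $H$ (which, by Lemma \ref{fa1}(1), consists of $\pi$-elements outside $1$), and so must lie in $K$. For the converse, I fix $1 \neq x \in K$ and apply Lemma \ref{fa1}(3) to conclude $\langle x \rangle \subseteq C_G(x) \subseteq K$. If $x$ had a nontrivial $\pi$-part $y$, then $y \in \langle x \rangle \subseteq K$ would be a non-identity $p$-element for some $p \in \pi$; but Lemma \ref{fa1}(1,2) forces $\abs{K}$ to be a $\pi'$-number, so a Sylow $p$-subgroup of $H$ is already a Sylow $p$-subgroup of $G$, and Sylow's theorem then places every $p$-element of $G$ inside some conjugate of $H$, contradicting $y \in K$. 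Hence every element of $K$ is a $\pi'$-element, and in particular $P \cap K = 1$ for every $\pi$-subgroup $P \le G$.

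Part (1) is now immediate. A second Frobenius complement $H'$ has $\abs{H'} = \abs{G}/\abs{K} = \abs{H}$ by Lemma \ref{fa1}(2), consists of $\pi$-elements by Lemma \ref{fa1}(1), and satisfies $H' \cap K = 1$ by the above; together with $\abs{H'}\abs{K} = \abs{G}$, this makes $H'$ a complement of the normal Hall $\pi'$-subgroup $K$ in $G$, and Schur--Zassenhaus delivers $g \in G$ with $H' = H^g$. For part (2), given a $\pi$-subgroup $P$, the auxiliary fact yields $P \cap K = 1$, so $PK$ is a subgroup of order $\abs{P}\abs{K}$ containing $K$ as a normal Hall $\pi'$-subgroup with $P$ as one complement. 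Because the projection $G \to G/K$ restricts to an isomorphism on $H$, the intersection $H \cap PK$ has order $\abs{P}$, meets $K$ trivially, and surjects onto $PK/K$; it is therefore another complement of $K$ in $PK$. Schur--Zassenhaus inside $PK$ then furnishes $x \in PK$ with $P = (H \cap PK)^x \subseteq H^x$, placing $P$ inside the Frobenius complement $H^x$.

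The only subtle point is the auxiliary identification of $K$ with the $\pi'$-elements of $G$: since $K$ is not a priori closed under multiplication, one must rely on its closure under the operation $y \mapsto y^n$, which is precisely what Lemma \ref{fa1}(3) delivers (by pulling the cyclic subgroup $\langle x\rangle$ into $C_G(x) \subseteq K$). Once this is in hand, the remaining steps are routine applications of Schur--Zassenhaus inside $G$ and inside $PK$.
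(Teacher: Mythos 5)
Your argument is internally consistent, but it is built on Frobenius's theorem --- the statement that $K$ is a normal subgroup of $G$ --- and that is exactly what this paper is not permitted to assume. The whole point of Theorems A and B is to establish, without character theory, that $K$ is a subgroup in certain situations; Frobenius's theorem \cite{Fxx} already gives the conclusion of Theorem B with no hypotheses whatsoever, so a proof of Lemma \ref{hacd} that invokes $K \trianglelefteq G$ renders the main theorem vacuous and reintroduces the character-theoretic input the paper is trying to eliminate. Note that the surrounding lemmas are phrased with this in mind: in Lemma \ref{fa1}, $\abs{K}$ is the cardinality of a normal \emph{subset}, and Lemma \ref{fa2} has ``$K$ is a subgroup'' as its \emph{conclusion}. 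Everything in your argument after the identification of $K$ with the set of $\pi'$-elements --- treating $K$ as a normal Hall $\pi'$-subgroup, forming the subgroup $PK$ of order $\abs{P}\abs{K}$, and applying Schur--Zassenhaus inside $G$ and inside $PK$ --- presupposes that the set $K$ is closed under multiplication, which is the open question. The paper instead cites Flavell's Theorem E \cite{F11}, where the $\mathpzc{C}_{\pi}$- and $\mathpzc{D}_{\pi}$-properties are obtained by elementary counting arguments that never assume $K$ is a subgroup.

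The one genuinely character-free piece of your proposal --- that every nonidentity element of $K$ is a $\pi'$-element, proved via Lemma \ref{fa1}(3) and a Sylow argument --- is correct and worth keeping in mind, but by itself it yields neither conjugacy of complements nor the $\mathpzc{D}_{\pi}$-property. A secondary point: even if one were willing to grant $K \trianglelefteq G$, the conjugacy half of Schur--Zassenhaus requires solvability of the normal Hall subgroup or of its complement, which in this generality is supplied only by the Feit--Thompson theorem; that dependency should at least be acknowledged.
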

\begin{proof}
Immediately follow from \cite[\textsc{Theorem E}]{F11}.
\end{proof}

\begin{lemma}\label{fa3}
Let $G$ be a Frobenius group with kernel $K$ and complement $H$, write $\pi = \pi(H)$. Suppose that  $L \leq G$ is neither $\pi$ nor $\pi^{'}$-group. Then there exists some $g \in G$ such that $L \cap H^g \neq 1$ and $L$ is a Frobenius subgroup of $G$ with kernel $L \cap K$ and complement $L \cap H^g$.
\end{lemma}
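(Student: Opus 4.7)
The plan is to locate a conjugate $H^g$ meeting $L$ nontrivially, verify that $L\cap H^g$ fulfils the Frobenius complement axioms inside $L$, and then use the uniqueness of Frobenius complements of $L$ up to $L$-conjugacy to identify the Frobenius kernel of $L$ with $L\cap K$. For the first step, since $L$ is not a $\pi'$-group, Cauchy's theorem gives an element of some prime order $p\in\pi$ inside $L$; the cyclic $\pi$-subgroup it generates lies in a Frobenius complement of $G$ by the $\mathpzc{D}_{\pi}$-property (Lemma \ref{hacd}(2)), producing $g\in G$ with $L\cap H^g\neq 1$.

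For the axioms, observe first that $(L\cap H^g)^x=L\cap(H^g)^x$ for every $x\in L$. If $x\in L\setminus H^g$, then $H^g$ and $(H^g)^x$ are distinct conjugates in $G$ and therefore meet trivially, whence $(L\cap H^g)\cap(L\cap H^g)^x=1$. If $x\in L$ normalizes $L\cap H^g$, then $L\cap H^g\subseteq H^g\cap(H^g)^x$, and nontriviality of $L\cap H^g$ forces $(H^g)^x=H^g$, so $x\in N_G(H^g)=H^g$ and hence $x\in L\cap H^g$. Thus $N_L(L\cap H^g)=L\cap H^g$ together with trivial intersection of distinct conjugates, and by the equivalent definition given in the introduction $L$ is a Frobenius group with complement $L\cap H^g$.

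It remains to show the Frobenius kernel $K_L$ of $L$ equals $L\cap K$. Clearly $L\cap K\subseteq K_L$, since every nontrivial element of $L\cap K$ is a $\pi'$-element and cannot lie in any $L$-conjugate of the $\pi$-group $L\cap H^g$. Conversely, suppose $1\neq l\in K_L$. Every element of $G$ is either a $\pi$-element or a $\pi'$-element---immediate from Lemma \ref{fa1}(3) together with the dual centralizer property $C_G(h)\subseteq H^y$ for $1\neq h\in H^y$, itself a direct consequence of the trivial intersection of distinct conjugates of $H$. If $l$ were a $\pi$-element, then $l\in H^y$ for some $y\in G$, making $L\cap H^y\neq 1$; the same verification makes $L\cap H^y$ a Frobenius complement of $L$, and by the $\mathpzc{C}_{\pi}$-property applied to the Frobenius group $L$ (Lemma \ref{hacd}(1)) it is $L$-conjugate to $L\cap H^g$, so $l$ would lie in an $L$-conjugate of $L\cap H^g$, contradicting $l\in K_L$. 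Hence $l$ is a $\pi'$-element, forcing $l\in L\cap K$. The delicate point I anticipate is precisely this final argument: the identification of $K_L$ with $L\cap K$ is cleanest via conjugacy of Frobenius complements inside $L$, rather than through a direct coset-chase through conjugates of $H$ in $G$.
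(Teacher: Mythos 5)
Your argument is correct, but it takes a genuinely different route from the paper: the paper's proof of this lemma is a one-line citation of Flavell's Corollary 3.1 (which is essentially the statement that such an $L$ is a Frobenius subgroup with kernel $L\cap K$), combined with Lemma \ref{hacd} to arrange the complement to be a conjugate of $H$. You instead reconstruct that corollary from the stated preliminaries: the $\mathpzc{D}_{\pi}$-property to produce $H^g$ with $L\cap H^g\neq 1$, the trivial-intersection and self-normalizing properties of conjugates of $H$ to verify that $L\cap H^g$ is a Frobenius complement of $L$, and the $\mathpzc{C}_{\pi}$-property applied inside $L$ to identify the kernel. This buys self-containedness, at the price of invoking Lemma \ref{hacd}(1) for $L$ with two a priori unrelated complements $L\cap H^g$ and $L\cap H^y$ --- legitimate as that lemma is stated, but this is exactly where Flavell's Theorem E does the real work, so the logical dependence on \cite{F11} is ultimately the same. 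Two small points you leave implicit but should record: first, $L\cap H^g<L$, which is where the hypothesis that $L$ is not a $\pi$-group enters (otherwise $L\setminus H^g$ could be empty, $L\cap H^g$ would be all of $L$, and the Frobenius axioms would fail); second, the inclusion $L\cap K\subseteq K_L$ follows more directly from $(L\cap H^g)^x\leq H^{gx}$ and $K\cap H^{gx}=1$ than from the $\pi'$-element dichotomy, though your version is also valid once the dichotomy is justified as you indicate. Neither point affects correctness.
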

\begin{proof}
Immediately follow by \cite[Corollary 3.1]{F11} and  Lemma \ref{hacd}.
\end{proof}

\section{Proof of Theorem B}

Let $H$ be a Frobenius complement. By Lemma \ref{hacd}(2), we may replace $D$ with a suitable conjugate to suppose that $N_G(D) \cap H \neq 1$. Corollary 3.3 of \cite{F11} implies that $N_G(D)\cap K$ is a normal subgroup of $N_G(D)$. Thus we may choose a subgroup $M \leq G$, maximal subject to

\begin{description}
\item [(i)] $M \cap K \neq 1$ and $M \cap H \neq 1$, and
\item [(ii)] $M \cap K$ is a subgroup.
\end{description}

Let $F = M \cap K$ and $\pi = \pi(F)$. Using the proof of \textsc{Theorem A}, we may assume that $F$ is a $2$-group and $\pi = \{ 2 \}$.

\begin{lemma}\label{fav2}
\begin{description}
\item [(i)] $F$ is a Sylow $2$-subgroup of $G$.
\item [(ii)] If $P \neq 1$ is a characteristic subgroup of $F$, then $M = N_G(P)$.
\item [(iii)] $F \trianglelefteq M$ and every $2$-subgroup of $M$ is contained in $F$.
\end{description}
\end{lemma}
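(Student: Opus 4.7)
The plan is to first establish the preliminary fact $F \trianglelefteq M$, which underlies all three parts. Since $K$ is a normal subset of $G$ (conjugation in $G$ permutes the point stabilizers $C_G(\alpha_i)$), for any $m \in M$ and $x \in F = M \cap K$ the conjugate $mxm^{-1}$ lies in both $M$ and $K$, hence in $F$; so $F \trianglelefteq M$. This is the first half of (iii). A second easy observation: since $F$ is a nontrivial $2$-subgroup of $K$ and $|G| = |K||H|$ with $\gcd(|K|,|H|) = 1$ by Lemma~\ref{fa1}, the complement $H$ has odd order, so every $2$-element of $G$ must lie in $K$ (it cannot lie in any Frobenius complement). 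Consequently any $2$-subgroup $T \leq M$ is contained in $M \cap K = F$, giving the second half of (iii).

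For (i), I would argue by contradiction. If $F$ is not a Sylow $2$-subgroup of $G$, then embedding $F$ in a Sylow $2$-subgroup $S$ of $G$ and applying standard $p$-group theory produces a proper overgroup $F_1 := N_S(F) > F$, which sits inside $K$ by the odd-order observation above. Consider $N := N_G(F)$; it contains $M$ (since $F \trianglelefteq M$) and also $F_1$. By Corollary 3.3 of \cite{F11}, $N \cap K$ is a normal subgroup of $N$, in particular a genuine subgroup of $G$; and it strictly contains $F = M \cap K$ because it contains $F_1$. Also $N \cap H \supseteq M \cap H \neq 1$ since $M \leq N$. Thus $N$ satisfies conditions (i) and (ii) of the maximality and strictly contains $M$, contradicting the choice of $M$.

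For (ii), let $P \neq 1$ be a characteristic subgroup of $F$. From $F \trianglelefteq M$ we get $P \trianglelefteq M$, whence $M \leq N_G(P)$. Applying Corollary 3.3 of \cite{F11} to the nontrivial subgroup $P \leq K$, the intersection $N_G(P) \cap K$ is a subgroup of $G$ containing $F \neq 1$, while $N_G(P) \cap H \supseteq M \cap H \neq 1$. So $N_G(P)$ satisfies conditions (i) and (ii), and maximality of $M$ forces $M = N_G(P)$. The main conceptual step, shared by (i) and (ii), is the use of Corollary 3.3 to upgrade $N \cap K$ from a mere subset to a genuine subgroup of $G$; the one place where real work is required is producing the proper overgroup $F_1$ in (i), so that is the principal obstacle.
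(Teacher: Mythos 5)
Your proof is correct, but it takes a more hands-on route than the paper. The paper's proof is a three-line appeal to Lemma~\ref{fa3}: since $M\cap K\neq 1$ and $M\cap H\neq 1$, $M$ is itself a Frobenius group with kernel $F=M\cap K$, from which $F\trianglelefteq M$ and $F\in Syl_2(M)$ drop out at once; part (ii) then follows from maximality, and part (i) from the normalizer-growth property of $2$-groups (if $F<S\in Syl_2(G)$ then $N_S(F)>F$ would be a $2$-subgroup of $N_G(F)=M$ outside $F$, against (iii)). You instead avoid the Frobenius-subgroup machinery entirely: you get $F\trianglelefteq M$ from the bare fact that $K$ is a normal subset, get ``all $2$-subgroups of $M$ lie in $F$'' from the oddness of $|H|$, and run both (i) and (ii) through Corollary 3.3 of \cite{F11} plus the maximality of $M$ --- your version of (i), which produces $N_S(F)>F$ inside $N_G(F)\cap K$ and contradicts maximality rather than part (iii), is a clean variant. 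What the paper's route buys is brevity and the extra structural fact that $M$ is Frobenius over $F$ (used later anyway); what yours buys is independence from Lemma~\ref{fa3} at this stage. One step you should tighten: the inference that $H$ has odd order does not follow from $|G|=|K||H|$ and $\gcd(|K|,|H|)=1$ alone, since a priori the $2$-part of $|G|$ could sit in $|H|$; you need the $\mathpzc{D}_{\pi}$-property of Lemma~\ref{hacd}(2) --- if $2\in\pi(H)$ then the $2$-group $F$ would lie in some complement $H^g$, which meets $K$ trivially, contradicting $1\neq F\subseteq K$. With that one line supplied, everything you wrote goes through.
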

\begin{proof}
  Lemma \ref{fa3} yields that $M$ is a Frobenius group with kernel $F$, so $F \in Syl_2(M)$ is normal in $M$, part(iii) follows. By the maximality  of $M$, we get $M = N_G(F) = N_G(P)$, part(ii) follows. The nilpotence of $F$ further implies $F \in Syl_2(G)$, part (i) follows.
\end{proof}

The following consequence is a complement to Lemma 4.3 in \cite{F11}, where $\pi \neq \{2 \}$.
\begin{lemma}\label{fav4}
All subgroups contained in $K$ are $2$-nilpotent.
\end{lemma}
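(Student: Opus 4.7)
The plan is to apply Frobenius' $p$-nilpotence criterion (Kurzweil 7.2.4) to a minimal counterexample. Suppose for contradiction that $L \leq G$ with $L \subseteq K$ is not 2-nilpotent, and take $|L|$ minimal. By the criterion, there is a 2-subgroup $Q \leq L$ and an element $g \in N_L(Q)$ of odd prime order $p$ not centralizing $Q$. The subgroup $\langle Q, g \rangle = Q \rtimes \langle g \rangle$ is itself not 2-nilpotent, so by minimality $L = Q \rtimes \langle g \rangle$, with $Q$ the normal Sylow 2-subgroup and $\langle g \rangle$ a Sylow $p$-subgroup acting non-trivially on $Q$.

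Conjugating $L$ by a suitable element of $G$ (which preserves $L \subseteq K$, since $K$ is a normal subset), we may assume $Q \leq F$. We cannot have $Q = F$, for then $L \leq N_G(F) = M$ would force $L \subseteq M \cap K = F$, a 2-group, contradicting the presence of the odd-order element $g$. Hence $Q \lneq F$, and since $F$ is a 2-group, $N_F(Q) \gneq Q$.

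Next, consider $R := N_G(Q) \cap K$, which is a subgroup of $G$ contained in $K$ by Corollary 3.3 of \cite{F11}. Since $L \leq R$ and $N_F(Q) \leq R$ with $L \cap N_F(Q) = L \cap F = Q \lneq N_F(Q)$, we have $L \lneq R$ strictly. If $N_G(Q) \not\subseteq K$, then $N_G(Q)$ satisfies conditions (i) and (ii) from the definition of $M$, so it lies in some maximal such subgroup $M'$; the same reductions that established Lemma \ref{fav2} (namely Lemma \ref{fa3} applied to $M'$, together with the reduction to $\pi = \{2\}$) show $M' \cap K$ is a Sylow 2-subgroup of $G$, whence $R \leq M' \cap K$ is a 2-group, contradicting $g \in L \leq R$. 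In the remaining case $N_G(Q) \subseteq K$, we have $R = N_G(Q)$, a subgroup of $K$ strictly larger than $L$; iterating the same analysis inside $R$ (selecting a fresh minimal non-2-nilpotent subgroup and rerunning the dichotomy) yields a strictly ascending chain of subgroups of $K$, which by finiteness of $G$ must terminate in a subgroup where the first alternative applies.

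The hard part will be closing the second case ($N_G(Q) \subseteq K$) cleanly. The iteration strictly enlarges the candidate subgroup at each step (driven by the relation $N_F(Q) \not\leq L$), but confirming that the terminal step of the ascending chain must land in the first alternative—so that the maximality of $M$ finally forces the contradiction—requires careful inspection of how the minimal-counterexample structure $Q \rtimes \langle g \rangle$ propagates to supergroups inside $K$.
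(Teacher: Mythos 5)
Your overall strategy --- reduce via Frobenius' criterion to a minimal non-$2$-nilpotent subgroup $L = Q \rtimes \langle g\rangle$ of $K$, push $Q$ into $F$, and split on whether $N_G(Q) \subseteq K$ --- begins in the same spirit as the paper, and your handling of the subcase $Q = F$ and of the case $N_G(Q) \not\subseteq K$ is essentially workable (the latter requires rerunning the reductions behind Lemma \ref{fav2} for the new maximal subgroup $M'$ and conjugating the complement so that condition (i) holds for it, but this is routine given Lemma \ref{hacd}). The genuine gap is exactly where you flag it: the case $N_G(Q) \subseteq K$. Your proposed iteration does not produce a strictly ascending chain. When you rerun the analysis inside $R = N_G(Q)$, the ``fresh minimal non-$2$-nilpotent subgroup'' of $R$ can perfectly well be $L$ itself --- it is already minimal among all non-$2$-nilpotent subgroups of $K$, hence among those of $R$ --- with the same $Q$, and the dichotomy then returns the same $R$. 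Nothing forces the new normalizer to contain the old one properly, so finiteness of $G$ gives no termination and the second alternative is never driven into the first.

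The quantity that actually decreases is not the order of the ambient subgroup but the index $\abs{F:Q}$, and this is how the paper closes the argument: since $Q < F$ and $F$ is a $2$-group, $N_F(Q) > Q$, so the subgroup $N_K(Q)$ of $K$ (a subgroup by Corollary 3.3 of \cite{F11}) contains a $2$-subgroup of strictly smaller index in $F$. Inducting on $\abs{F:Q}$, with base case $Q = F$ where $N_K(F) = M\cap K = F$ is a $2$-group, one concludes that $N_K(Q)$ --- and hence $N_L(Q)$ for every nontrivial $2$-subgroup $Q$ of every subgroup $L \subseteq K$ --- is $2$-nilpotent, after which Frobenius' criterion \cite[7.2.4]{Kurz} applies directly to $L$ with no minimal-counterexample bookkeeping at all. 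If you wish to keep your framing, you would have to minimize over $\abs{F:Q}$ rather than over $\abs{L}$, and then verify that the reduction to the Schmidt-group structure $Q\rtimes\langle g\rangle$ does not destroy that minimality (the $Q$ produced by the criterion need not be a Sylow $2$-subgroup of the original $L$); that is precisely the ``careful inspection'' you deferred, so as written the proof is incomplete.
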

\begin{proof}
Lemma \ref{fav2} (i) shows that $F \in Syl_2(G)$. We shall proceed induction on the quotients of $\abs{F}$ by the orders of $2$-subgroups of all subgroups in $K$. Let $L \subseteq K$ be a subgroup. If $L$ is a $2'$-group, there is nothing to prove. Choose $1 \neq P \in Syl_2(L)$. Replacing $L$ by a suitable conjugate, we may suppose that $1 < Q \leq P \leq F$. Consider the quotient $\abs{F}/\abs{Q}$. If $\abs{F} = \abs{Q}$, then $N_L(Q) = N_G(Q) \cap L \leq M \cap K = Q$, clearly $N_L(Q)$ is $2$-nilpotent. If $Q < F$, then $Q < R = N_F(Q) \leq N_G(Q) \cap K = N_K(Q)$ which is a subgroup by Corollary 3.3 of \cite{F11}. As $\abs{F:R} < \abs{F: Q}$, the induction hypothesis implies that $N_K(Q)$ is $2$-nilpotent, and thus $N_L(Q)$ ($\leq N_K(Q)$) is also $2$-nilpotent. Frobenius' $p$-nilpotence criterion (7.2.4 in \cite[p170]{Kurz}) implies that $L$ is $2$-nilpotent.
\end{proof}

\begin{lemma}\label{fav5}
Let $P$ be a nontrivial subgroup of $F$. Then
\begin{description}
\item [(i)] $N_F(P)$ is a Sylow $2$-subgroup of $N_G(P)$, and
\item [(ii)] $N_G(P) = \mathscr{O}_{2'}(N_G(P))(N_G(P) \cap M)$.
\end{description}
\end{lemma}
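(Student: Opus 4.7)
The plan is to prove (ii) by induction on $|F:P|$ and derive (i) from it. In the base case $P=F$, Lemma \ref{fav2}(ii) gives $N_G(F) = M$, and any normal $2'$-subgroup $N$ of the Frobenius group $M$ (with kernel $F$) satisfies $[N,F] \leq N \cap F = 1$, hence $N \leq C_M(F) \leq F$ (using Lemma \ref{fa1}(iii) to see that $C_M(f) \leq K \cap M = F$ for each $1 \neq f \in F$). So $\mathscr{O}_{2'}(M)=1$, (ii) is trivial, and (i) reduces to $N_F(F)=F \in \mathrm{Syl}_2(M)$.

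For the inductive step, write $X = N_G(P)$ and $L = N_K(P) = X \cap K$. By Corollary 3.3 of \cite{F11}, $L$ is a subgroup, by Lemma \ref{fav4} it is $2$-nilpotent, and $L \trianglelefteq X$ because $K$ is a normal subset of $G$. Put $V = \mathscr{O}_{2'}(L)$. Then $V = \mathscr{O}_{2'}(X)$: $V$ is characteristic in $L$ and so normal in $X$, while conversely any normal $2'$-subgroup $N$ of $X$ satisfies $[N,P] \leq N \cap P = 1$, so $N \leq C_G(P) \leq K$ by Lemma \ref{fa1}(iii), forcing $N \trianglelefteq L$ and hence $N \leq V$. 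Let $Y$ be the pre-image in $X$ of $\mathscr{O}_2(X/V)$; then $Y$ is $2$-nilpotent with $Y = V \rtimes S$ (Sylow $2$-subgroup chosen to contain $P$), and the Frattini argument applied to $Y \trianglelefteq X$ yields $X = V \cdot N_X(S)$.

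The crux is to show $N_X(S) \leq M$. Here the inductive hypothesis enters through $R := N_F(P) \supsetneq P$ (strict because $F$ is nilpotent and $P \lneq F$), which gives $N_G(R) = \mathscr{O}_{2'}(N_G(R))(N_G(R) \cap M)$ and $N_F(R) \in \mathrm{Syl}_2(N_G(R))$. The idea is to use the $V$-conjugacy of Sylow $2$-subgroups of $Y$ and the structure of the $2$-nilpotent group $L = N_K(P)$ to arrange $S \leq F$, and then, iterating the inductive conclusion up the ascending chain $P < N_F(P) < N_F(N_F(P)) < \cdots < F$ together with Lemma \ref{fav2}(ii) (which gives $N_G(P^*) = M$ for any nontrivial characteristic subgroup $P^* \leq F$), to conclude that every element of $N_X(S)$ normalizes $F$.

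Granted $N_X(S) \leq M$, (ii) follows from $X = V \cdot N_X(S) \subseteq V(X \cap M) \subseteq X$. For (i): since $V$ is a $2'$-group and $F$ a $2$-group, $V \cap M \leq V \cap F = 1$, so $|X| = |V| \cdot |X \cap M|$, hence $|X|_2 = |X \cap M|_2$; and because $F \trianglelefteq M$ is a Sylow $2$-subgroup of $M$, every $2$-subgroup of $X \cap M$ lies in $F$, giving $|X \cap M|_2 = |N_F(P)|$ and thus $N_F(P) \in \mathrm{Syl}_2(X)$. The hard part is the $M$-control step $N_X(S) \leq M$: Frattini reduces the analysis modulo $V$ but does not pin down where $S$ sits among the $G$-conjugates of $F$, so one must exploit the maximality of $M$ together with the inductive hypothesis to force $S \leq F$.
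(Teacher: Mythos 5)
Your reduction --- Frattini with respect to the preimage $Y$ of $\mathscr{O}_2(X/V)$ and a Sylow $2$-subgroup $S$ of $Y$, followed by the claim $N_X(S)\leq M$ --- leaves the entire content of the lemma unproved, and the crux claim is not even true in the generality you would need. Nothing in your setup forces $\mathscr{O}_2(X/V)$ to be larger than $PV/V$; if $Y=PV$, then $P$ itself is a Sylow $2$-subgroup of $Y$ (since $V$ is a $2'$-group), so $S=P$, the Frattini step degenerates to the tautology $X=VX$, and $N_X(S)\leq M$ becomes the assertion $N_G(P)\leq M$. That assertion fails whenever $V=\mathscr{O}_{2'}(N_G(P))\neq 1$, because $V\leq K$ and $V\cap M\leq K\cap M=F$ force $V\cap M=1$ --- and $V\neq 1$ is exactly the situation part (ii) is designed to handle. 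The proposed repair, ``iterating the inductive conclusion up the chain $P<N_F(P)<N_F(N_F(P))<\cdots<F$,'' is also not coherent for an element $x\in N_X(S)$ that is not already known to normalize $F$: the chain is defined in terms of $F$, and $x$ need not preserve it. So what you have deferred as ``the hard part'' is not a verification but the missing idea.

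The paper supplies that idea by choosing the Frattini data differently and by proving (i) \emph{before} (ii). It applies the inductive hypothesis not to an abstractly chosen $S$ but to $Q=N_F(P)>P$: from $N_G(Q)=\mathscr{O}_{2'}(N_G(Q))(N_G(Q)\cap M)$ together with $\mathscr{O}_{2'}(N_G(Q))\leq C_G(Q)\leq C_G(P)\leq N_G(P)\cap K$, Dedekind's lemma gives $N_G(P)\cap N_G(Q)=\mathscr{O}_{2'}(N_G(Q))(N_G(P)\cap N_G(Q)\cap M)$, whence $Q$ is a Sylow $2$-subgroup of $N_G(P)\cap N_G(Q)$; the normalizer-growth property of $2$-groups ($Q<\tilde Q$ implies $N_{\tilde Q}(Q)>Q$) then promotes $Q$ to a Sylow $2$-subgroup of all of $N_G(P)$, which is (i). Only then is the Frattini argument applied, to the normal subgroup $N_G(P)\cap K$ with Sylow $2$-subgroup $Q$, so that the complementary factor is exactly $N_G(P)\cap N_G(Q)$ --- a group already computed and already known to lie in $(N_G(P)\cap K)(N_G(P)\cap M)$; the $2$-nilpotence of $N_G(P)\cap K$ from Lemma~\ref{fav4} then finishes (ii). Your identification $V=\mathscr{O}_{2'}(X)$, your base case, and your derivation of (i) from (ii) are all correct, but the logical order matters: it is (i), established first via the specific subgroup $Q=N_F(P)$, that makes the Frattini argument land in a normalizer the induction already controls.
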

\begin{proof}
We argue by induction on $\abs{F:P}$. If $P = F$ then $N_G(P) = M$ and the result is trivial. Suppose now that $P < F$. Then $Q = N_F(P) > P$. By induction we obtain
$$N_G(Q) = \mathscr{O}_{2'}(N_G(Q))(N_G(Q) \cap M). \quad\quad \mbox{(1)}$$
Since $P < Q$ and $C_G(P) \subseteq K$ (by Lemma \ref{fa1}(2)), we further reach
$$\mathscr{O}_{2'}(N_G(Q)) \leq C_G(Q) \leq C_G(P) \leq N_G(P) \cap K. \quad\quad \mbox{(2)}$$
Using Dedekind's lemma, it follows via (1) and (2) that
$$ N_G(P) \cap N_G(Q) = \mathscr{O}_{2'}(N_G(Q)))(N_G(P) \cap N_G(Q) \cap M). \quad\quad \mbox{(3)}$$
By Lemma \ref{fav2}(iii) we see that $N_G(P) \cap N_G(Q) \cap F$ is a Sylow $2$-subgroup of $N_G(P) \cap N_G(Q) \cap M$ and hence of $N_G(P) \cap N_G(Q)$ (by equality (3)). Since $Q = N_F(P) = N_G(P) \cap N_G(Q) \cap F$, we attain that $Q$ is a Sylow $2$-subgroup of $N_G(P) \cap N_G(Q)$. We claim that $Q$ is also a Sylow $2$-subgroup of $N_G(P)$. If otherwise, then there would be a $2$-subgroup $\tilde{Q} \in Syl_2(N_G(P))$ such that $P < Q < \tilde{Q} \leq N_G(P)$, thus $\tilde{Q} \cap N_G(Q)/Q \leq N_G(P) \cap N_G(Q)/Q$ which is a $2'$-group, this forces $ N_{\tilde{Q}}(Q) = \tilde{Q} \cap N_G(Q) = Q $. This contradiction shows $Q \in Syl_2(N_G(P))$, and yielding (i).

By Corollary 3.3 of \cite{F11} we have $N_G(P) \cap K \trianglelefteq N_G(P)$ so (i) and the Frattini argument yield
$$ N_G(P) = (N_G(P) \cap K)(N_G(P) \cap N_G(Q)).$$
Using (3) and (2) we deduce
\[ \begin{array}{lll}
N_G(P) & = &(N_G(P) \cap K)(N_G(P) \cap N_G(Q) \cap M) \\
 & \leq &(N_G(P) \cap K)(N_G(P) \cap M) \leq N_G(P),
\end{array}\]
hence $N_G(P) = (N_G(P) \cap K)(N_G(P) \cap M)$. Because $N_G(P) \cap K$ is $2$-nilpotent (by Lemma \ref{fav4}) and $Q$ is a Sylow $2$-subgroup of $N_G(P) \cap K$, it follows  $N_G(P) \cap K = \mathscr{O}_{2'}(N_G(P)\cap K)Q$. Since $\mathscr{O}_{2'}(N_G(P)\cap K) \leq \mathscr{O}_{2'}(N_G(P))$ and $Q \leq N_G(P) \cap M,$ we derive that $$N_G(P) = \mathscr{O}_{2'}(N_G(P))(N_G(P) \cap M),$$ which proves (ii).
\end{proof}

The next consequence is ``dual" to the above Lemma \ref{fav5}.
\begin{lemma}\label{fav8}
Let the nonidentity element $ f \in F $. Then
\begin{description}
\item [(i)] $C_F(f)$ is a Sylow $2$-subgroup of $C_G(f)$, and
\item [(ii)] $C_G(f) = \mathscr{O}_{2'}(C_G(f))(C_G(f) \cap M)$.
\end{description}
\end{lemma}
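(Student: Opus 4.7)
My plan is to derive Lemma \ref{fav8} from Lemma \ref{fav5} applied to the nontrivial cyclic subgroup $P = \langle f\rangle \leq F$, passing from $N_G(\langle f\rangle)$ to its normal subgroup $C_G(f) = C_G(\langle f\rangle)$. This exploits exactly the ``duality'' the paper advertises between Lemma \ref{fav5} (about subgroup normalizers in $F$) and Lemma \ref{fav8} (about element centralizers in $F$).

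For part (i), Lemma \ref{fav5}(i) with $P = \langle f\rangle$ gives $N_F(\langle f\rangle) \in Syl_2(N_G(\langle f\rangle))$. Since $C_G(f)$ is the kernel of the conjugation action of $N_G(\langle f\rangle)$ on $\langle f\rangle$, it is normal in $N_G(\langle f\rangle)$. The standard fact that a Sylow $p$-subgroup of the ambient group meets a normal subgroup in a Sylow $p$-subgroup of the latter then yields $N_F(\langle f\rangle) \cap C_G(f) \in Syl_2(C_G(f))$. Any element of $F$ centralizing $f$ automatically normalizes $\langle f\rangle$, so this intersection equals $C_F(f)$, establishing (i).

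For part (ii), Lemma \ref{fa1}(3) forces $C_G(f) \subseteq K$, and Lemma \ref{fav4} then ensures that $C_G(f)$ is $2$-nilpotent. Consequently $C_G(f) = \mathscr{O}_{2'}(C_G(f)) \cdot T$ for any $T \in Syl_2(C_G(f))$; choosing $T = C_F(f)$ via part (i), the inclusions $C_F(f) \leq F \leq M$ give $C_F(f) \leq C_G(f) \cap M$, from which $C_G(f) = \mathscr{O}_{2'}(C_G(f))(C_G(f) \cap M)$ follows at once.

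I do not anticipate a genuine obstacle: once Lemma \ref{fav5} is in hand for the cyclic subgroup $\langle f\rangle$, both parts are essentially bookkeeping. The only subtlety is the identification $N_F(\langle f\rangle) \cap C_G(f) = C_F(f)$, which is immediate from the containments between centralizer and normalizer. If anything, one might have been tempted to reprove (i) by an induction on $|F:C_F(f)|$ mirroring the induction on $|F:P|$ in Lemma \ref{fav5}, but invoking Lemma \ref{fav5} directly on $\langle f\rangle$ is strictly cleaner.
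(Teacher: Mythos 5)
Your proof is correct and follows essentially the same route as the paper: both apply Lemma \ref{fav5}(i) to $P=\langle f\rangle$, use $C_G(f)\trianglelefteq N_G(P)$ to get $C_F(f)\in Syl_2(C_G(f))$ (the paper just unwinds the standard ``Sylow meets normal subgroup in a Sylow'' fact via the index computation $\abs{C_G(f)N_F(P):N_F(P)}=\abs{C_G(f):C_F(f)}$), and then deduce (ii) from $C_G(f)\subseteq K$, Lemma \ref{fav4}, and $C_F(f)\leq C_G(f)\cap M$.
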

\begin{proof}
Let $P = \langle f \rangle$, then $C_G(f) = C_G(P) \trianglelefteq N_G(P)$. By Lemma \ref{fav5}(i) $N_F(P)$ is a Sylow $2$-subgroup of $N_G(P)$, so it is also a Sylow $2$-subgroup of $C_G(f)N_F(P)$ which is contained in $ N_G(P)$. It is seen that $$\abs{C_G(f)N_F(P): N_F(P)} = \abs{C_G(f): (C_G(f) \cap N_F(P))} = $$ $$= \abs{C_G(f): (C_G(f) \cap N_G(P) \cap F)} =  \abs{C_G(f): (C_G(f)\cap F)} =  \abs{C_G(f):C_F(f)}$$ is a $2'$-number, thus $C_F(f)$ is a Sylow $2$-subgroup of $C_G(f)$. This is proves (i).

Lemma 2.1 (iii) shows $C_G(f) \subseteq K$, and Lemma \ref{fav4} further gives that $G_G(f)$ is $2$-nilpotent and so using part (i) we attain that $C_G(f) = \mathscr{O}_{2'}(C_G(f)) C_F(f)$. As $C_F(f) = C_M(f) = C_G(f) \cap M$, we achieve $$C_G(f) = \mathscr{O}_{2'}(C_G(f)) (C_G(f) \cap M),$$ which proves (ii).
\end{proof}

\begin{lemma}\label{fav6}
Any two elements of $F$ that are conjugate in $G$ are already conjugate in $M$.
\end{lemma}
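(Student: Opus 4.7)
The plan is to adjust the conjugating element step by step until it lies in $M$, climbing a normalizer chain inside $F$. The enabling observation, already implicit in the proof of Lemma \ref{fav5}, is that $\mathscr{O}_{2'}(N_G(Q))$ centralizes $Q$ for every nontrivial $2$-subgroup $Q\leq F$: indeed $Q\leq N_G(Q)$ normalizes the normal $2'$-subgroup $\mathscr{O}_{2'}(N_G(Q))$, so the commutator $[Q,\mathscr{O}_{2'}(N_G(Q))]$ sits in $Q\cap \mathscr{O}_{2'}(N_G(Q))=1$.

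Given $f_1^g=f_2$ with $f_1,f_2\in F$, I would first invoke Sylow conjugacy inside $C_G(f_2)$: by Lemma \ref{fav8}(i), $C_F(f_1)^g$ and $C_F(f_2)$ are both Sylow $2$-subgroups of $C_G(f_2)$, so after multiplying $g$ by a suitable element of $C_G(f_2)$ I may assume that the new element $h$ satisfies $f_1^h=f_2$ and $C_F(f_1)^h=C_F(f_2)$. Setting $P_0=C_F(f_1)$ and $P_{i+1}=N_F(P_i)$, the fact that $F$ is a $2$-group forces this chain to ascend strictly until it reaches $F$ at some step $n\geq 0$.

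I then would prove by induction on $i$ that there exists $h_i\in G$ with $f_1^{h_i}=f_2$ and $P_i^{h_i}\leq F$; the case $i=n$ forces $F^{h_n}\leq F$, hence $h_n\in N_G(F)=M$. For the inductive step, $P_{i+1}^{h_i}$ sits as a $2$-subgroup in $N_G(P_i^{h_i})$, so Lemma \ref{fav5}(i) provides some $d\in N_G(P_i^{h_i})$ conjugating it into $N_F(P_i^{h_i})\leq F$. Decompose $d=ym$ via Lemma \ref{fav5}(ii), with $y\in \mathscr{O}_{2'}(N_G(P_i^{h_i}))$ and $m\in N_G(P_i^{h_i})\cap M$. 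Since $f_2=f_1^{h_i}\in P_0^{h_i}\leq P_i^{h_i}$, the centralization observation above gives $f_2^y=f_2$, whence $f_2^d=f_2^m$. Setting $h_{i+1}=h_idm^{-1}$, one verifies $f_1^{h_{i+1}}=f_2$ and $P_{i+1}^{h_{i+1}}\leq F^{m^{-1}}=F$, completing the step.

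The principal obstacle is that a naive Sylow correction by $d$ would generally disturb the image of $f_1$ and need not lie in $M$. The decomposition in Lemma \ref{fav5}(ii) is precisely what allows these two difficulties to be split apart: the $2'$-part harmlessly centralizes $f_2$, and the $M$-part can be undone by $m^{-1}$ without leaving $F$, since $m$ normalizes $F$.
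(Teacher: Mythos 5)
Your proof is correct, but it follows a genuinely different route from the paper's. The paper conjugates $Z(F)^g$ into $C_F(f^g)$ by an element $c$ of $C_G(f^g)$ (using Lemma \ref{fav8}(i)), then observes that $N_G(Z(F)^{gc})=M^{gc}$ because $Z(F)$ is characteristic in $F$ (Lemma \ref{fav2}(ii)); since $M^{gc}$ has the unique Sylow $2$-subgroup $F^{gc}$ while Lemma \ref{fav5}(i) says $N_F(Z(F)^{gc})$ is also a Sylow $2$-subgroup of that normalizer, it gets $F^{gc}=F$ and $gc\in N_G(F)=M$ in one step. You instead run a Burnside/Alperin-style fusion argument up the normalizer tower $P_0=C_F(f_1)<N_F(P_0)<\cdots<F$, at each stage correcting the conjugating element by a Sylow adjustment and then splitting the correcting element $d=ym$ via the factorization of Lemma \ref{fav5}(ii), with the coprime-commutator observation $[Q,\mathscr{O}_{2'}(N_G(Q))]\leq Q\cap\mathscr{O}_{2'}(N_G(Q))=1$ guaranteeing that the $2'$-part fixes $f_2$. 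Every step checks out (in fact you could even start the tower at $\langle f_1\rangle$ and skip the initial Sylow adjustment, since $\langle f_1\rangle^g=\langle f_2\rangle\leq F$ already). The trade-off: the paper's argument is shorter and leans on the characteristic subgroup $Z(F)$ together with Lemma \ref{fav2}(ii), whereas yours never needs $Z(F)$ or the identification $N_G(Z(F)^{gc})=M^{gc}$, but does invoke part (ii) of Lemma \ref{fav5}, which the paper's proof of this particular lemma leaves unused; your version is the more standard ``control of fusion'' template and would generalize more readily.
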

\begin{proof}
Let $f \in F$ and $g \in G$ be such that $f^g \in F$. We see $Z(F)^g \leq C_G(f^g)$ and Lemma \ref{fav8}(i) gives that $C_F(f^g)$ is a Sylow $2$-subgroup of $C_G(f^g)$. Then there is $c \in C_G(f^g)$ so that $Z(F)^{gc} \leq C_F(f^g) \leq F$. Clearly $f^{g} = f^{gc}$, and thus it suffices to show that $gc \in M$. Because $N_G(Z(F)^{gc}) = M^{gc}$ and $F$ is a normal Sylow $2$-group of $M$, we obtain $F^{gc}$ is the unique Sylow $2$-subgroup of $N_G(Z(F)^{gc})$. Using Lemma \ref{fav5}(i), $N_F(Z(F)^{gc})$  is also a Sylow $2$-subgroup of $N_G(Z(F)^{gc})$. We may extract that $F^{gc} = N_F(Z(F)^{gc}) \leq F$ so $F^{gc} = F$, then $gc \in N_G(F) = M$, as required.
\end{proof}

\begin{lemma}\label{fav7}
$H \leq M$.
\end{lemma}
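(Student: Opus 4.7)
The plan is to argue by contradiction: suppose $H \not\subseteq M$, and fix an element $h \in H\setminus M$ of prime order $p$ (necessarily odd, since Lemma \ref{fa1}(1)(2) forces $|H|$ to be coprime to $|F|=|G|_2$). Write $H_0 := H\cap M \neq 1$; by Lemma \ref{fa3} together with the maximality choice of $M$, $H_0$ is a Frobenius complement of the Frobenius subgroup $M$, so $M = F H_0$ with $H_0$ acting fixed-point-freely on $F$, and in particular $N_M(H_0)=H_0$.

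First I would collect three preparatory facts. (a)~By the TI-property of Frobenius complements of $G$, any $g\in G$ with $H_0^g=H_0$ satisfies $1\neq H_0\le H\cap H^g$, forcing $H^g=H$ and so $g\in N_G(H)=H$; hence $N_G(H_0)\le H$. (b)~In the Frobenius group $M$, a normal $2'$-subgroup meets $F$ trivially and embeds in the non-normal complement $H_0$, so $\mathscr{O}_{2'}(M)=1$. (c)~Combining Lemma \ref{fav2}(ii) (which gives $M=N_G(Z(F))$) with Lemma \ref{fa1}(3) applied to a nontrivial element of $Z(F)$, the subgroup $C_G(Z(F))$ is a normal subgroup of $M$ contained in $K$; in particular $\mathscr{O}_{2'}(C_G(Z(F)))$ is a normal $2'$-subgroup of $M$, hence trivial, and therefore $C_G(Z(F)) = F$.

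Main step: I would then show that every $h\in H$ normalizes $Z(F)$, which via (c) and Lemma \ref{fav2}(ii) places $h$ in $M$ and contradicts the choice of $h$. Suppose $Z(F)^h\neq Z(F)$, so that $F^h\neq F$ is a distinct Sylow $2$-subgroup of $G$, with $F^h\subseteq K$ since $K$ is a normal subset of $G$. For $z\in Z(F)\setminus\{1\}$ the element $z^h$ lies in $F^h\subseteq K$, so by Lemma \ref{fav6} there is $m\in M$ with $z^m=z^h$, whence $mh^{-1}\in C_G(z^h)$. By Lemma \ref{fav4}, $C_G(z^h)$ is $2$-nilpotent, and Lemma \ref{fav8} gives $C_G(z^h)=\mathscr{O}_{2'}(C_G(z^h))\cdot C_F(z^h)$ with $C_F(z^h)\le M$. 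Writing $mh^{-1}=uv$ with $u\in \mathscr{O}_{2'}(C_G(z^h))$ and $v\in C_F(z^h)\le M$, this expresses $h\in M\cdot \mathscr{O}_{2'}(C_G(z^h))$.

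The main obstacle is closing the argument: since $\mathscr{O}_{2'}(C_G(z^h))\cap M\le K\cap M=F$ is both a $2'$- and a $2$-group, the intersection is trivial, so the double-coset does not collapse automatically. The finishing step that I expect to be the hard part is to show that the $\mathscr{O}_{2'}$-piece must itself vanish. My proposed route is to run the same decomposition uniformly over all $z\in Z(F)\setminus\{1\}$: the subgroup $U:=\langle \mathscr{O}_{2'}(C_G(z)) : 1\neq z\in Z(F)\rangle$ is normalized by $M=N_G(Z(F))$, is a $2'$-subgroup of $K$, and its intersection with $F=C_G(Z(F))$ is trivial; hence $MU$ properly enlarges $M$ unless $U=1$. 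The maximality of $M$ (via conditions (i), (ii) in Section~3) combined with Lemma \ref{fav4} applied to $MU\cap K$ is then intended to force $U=1$, which yields $\mathscr{O}_{2'}(C_G(z^h))=1$ and thus $h\in M$, the desired contradiction.
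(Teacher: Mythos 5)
Your proposal does not close, and two of its steps are genuinely broken rather than merely unfinished. First, the appeal to Lemma \ref{fav6} in the main step is a misapplication: that lemma concerns two elements \emph{of $F$} that are conjugate in $G$, whereas under your hypothesis $Z(F)^h \neq Z(F)$ the element $z^h$ lies in $F^h$ and need not lie in $F$ at all. In that case no $m \in M$ with $z^m = z^h$ can exist, since $z^m \in F^m = F$. Second, and more fundamentally, the finishing step you flag as ``the hard part'' is not a gap that maximality can fill: the subgroup $U=\langle \mathscr{O}_{2'}(C_G(z)) : 1\neq z\in Z(F)\rangle$ is a join of subgroups each \emph{contained in} $K$, but $K$ is at this stage only a normal subset of $G$, not a subgroup --- that is precisely what Theorem B is trying to prove. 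So you cannot conclude that $U\subseteq K$, that $U$ is a $2'$-group, or that $MU\cap K$ is a subgroup to which the maximality of $M$ (conditions (i),(ii)) applies. Your preparatory facts (a)--(c) are essentially correct (e.g.\ $C_G(Z(F))=F$ does follow from $C_G(Z(F))\trianglelefteq M$ and $C_G(Z(F))\le M\cap K=F$), but they do not suffice to show that every $h\in H$ normalizes $Z(F)$.

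The paper's proof takes a completely different, global route: it is the classical Frobenius-style counting argument. One takes representatives $f_1,\dots,f_\alpha$ of the $G$-classes of nontrivial $2$-elements, chosen in $F$, forms the sets $T_i=f_i\,\mathscr{O}_{2'}(C_G(f_i))\subseteq C_G(f_i)\subseteq K$, and uses the unique decomposition of each group element into commuting $2$- and $2'$-parts to see that the $G$-translates of the $T_i$ are pairwise disjoint subsets of $K\setminus\{1\}$. Counting gives $\abs{K}>\sum_i\abs{G:C_G(f_i)}\abs{T_i}$, and then Lemmas \ref{fav8}, \ref{fav6} and \ref{fav2} convert this into $1>\abs{H:H\cap M}\bigl(1-\tfrac{1}{\abs{F}}\bigr)$, whence $\abs{H:H\cap M}<2$ and $H\le M$. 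This is where the local lemmas you tried to use for a fusion argument are actually consumed: \ref{fav6} identifies the $G$-classes of $2$-elements with $M$-classes, and \ref{fav8} pins down $\abs{T_i}$ and $\abs{C_G(f_i)}$. If you want to repair your approach you would need to replace the normalizer/join argument by this count (or something equivalent); as written, the argument cannot be completed.
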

\begin{proof}
Let $\{f_1, f_2, \cdots, f_{\alpha}\}$ be a set of representatives for the conjugacy classes of nontrivial $2$-elements of $G$. Using Sylow's theorem,  we may choose $f_i \in F$ for all $i$.

For each $i$ let $$ T_i = f_i \mathscr{O}_{2'}(C_G(f_i)),$$ and we see $T_i \subseteq C_G(f_i) \subseteq K$. Recall that each element of $G$ can be expressed uniquely as a commuting product of a $2$-element and a $2'$-element. For all $i, j$ and all $g \in G$, we may deduce that $$T_i \cap T_j^g \neq \phi  \mbox{\quad if and only if \quad} i = j \mbox{ and } g \in C_G(f_i). $$
Then $$ K \supsetneq  \bigcup_{g \in G}\bigcup_{i=1}^{\alpha}T_i^g = \bigcup_{i=1}^{\alpha}\bigcup_{g \in G}T_i^g,$$
Hence   $$ \abs{K} > \sum_{i=1}^{\alpha} \abs{G:C_G(f_i)}\abs{T_i}.$$

Lemma \ref{fav8} shows that $C_G(f_i) = \mathscr{O}_{2'}(C_G(f_i))C_F(f_i)$ for each $i$.  Lemma 2.1(ii) gives that $\abs{G} = \abs{K}\abs{H}$. We therefore derive that
$$ \frac{1}{\abs{H}} >  \sum_{i=1}^{\alpha} \frac{1}{\abs{C_F(f_i)}}. $$

By Lemma \ref{hacd} we allow to replace $H$ by a suitable conjugate so that $M \cap H \neq 1$, then Lemma \ref{fa3} guarantees that $M \cap H $ and $F$ are Frobenius complement and kernel, respectively. Lemma 2.1(ii) shows $\abs{M} = \abs{H \cap M}\abs{F}$. Thus we obtain the following form of the above inequality.

$$1 > \sum_{i=1}^{\alpha} \frac{\abs{H}}{\abs{C_F(f_i)}} = \frac{\abs{H : H \cap M}}{\abs{F}}\sum_{i=1}^{\alpha}\frac{\abs{H \cap M}\abs{F}}{\abs{C_F(f_i)}}.$$ By Lemma 2.1(3) we have $C_F(f_i) = C_M(f_i)$ for all $i$, whence $$1 > \frac{\abs{H : H \cap M}}{\abs{F}}\sum_{i=1}^{\alpha}\frac{\abs{M}}{\abs{C_M(f_i)}}.$$ Lemma \ref{fav6} implies that $\{f_1, f_2, \cdots, f_{\alpha}\}$ is a set of representatives for the conjugacy classes of nontrivial $2$-elements of $M$. By Lemma \ref{fav2} the set of $2$-elements of $M$ is equal to $F - \{1\}$ thus $$1 > \frac{\abs{H : H \cap M}}{\abs{F}}(\abs{F}-1) = \abs{H : H\cap M}(1 - \frac{1}{\abs{F}}).$$ Now $\abs{F} \geq 2$ so $2 > \abs{H :H \cap M}$ and hence $H \leq M$, as desired.

Finally, by Lemmas \ref{fav7} and \ref{fa3} we have $M = F \rtimes H$, and  Lemma \ref{fa2} implies that $K$ is a normal subgroup of $G$, the whole proof is complete.
\end{proof}

\bigskip

\noindent \textbf{Acknowledgments} \quad

The authors would like to thank Prof. Jiwen Zeng for his report (in July, 2018) on the generalized Frobenius group which motivates the authors to consider this problem.  This work was supported by National Natural Science Foundation of China (Grant No.12171058).

\bibliographystyle{amsalpha}

\end{document}